\newtheorem{thm}{Theorem}[section]
\newtheorem{lem}[thm]{Lemma}
\newtheorem{definition}[thm]{Definition}
\title{Maximum average degree and relaxed coloring}
\author{Michael Kopreski and Gexin Yu}
\address{Department of Mathematics\\
 The College of William and Mary\\
 Williamsburg, VA, 23185, USA}
\thanks{The research was partially supported by the NSA grant  H98230-16-1-0316 and NSF EXTREEMS-QED grant DMS-1331921. The first author was supported an honor fellowship at the College of William and Mary. }
\email{gyu@wm.edu}
\begin{document}
\maketitle

\begin{abstract}
We say a graph is $(d, d, \ldots, d, 0, \ldots, 0)$-colorable with $a$ of $d$'s and $b$ of $0$'s if $V(G)$ may be partitioned into $b$ independent sets $O_1,O_2,\ldots,O_b$ and $a$ sets $D_1, D_2,\ldots, D_a$ whose induced graphs have maximum degree at most $d$.   The maximum average degree, $mad(G)$, of  a graph $G$ is the maximum average degree over all subgraphs of $G$.  In this note, for nonnegative integers $a, b$, we show that if $mad(G)< \frac{4}{3}a + b$, then $G$ is $(1_1, 1_2, \ldots, 1_a, 0_1, \ldots, 0_b)$-colorable.
\medskip

\noindent{\bf Keywords:}  Relaxed coloring, maximum average degree, global discharing

\noindent{\bf AMS Subject class:} 05C15
\end{abstract}

\section{Introduction}
We say a graph is $(d, d, \ldots, d, 0, \ldots, 0)$-colorable with $a$ of $d$'s and $b$ of $0$'s if $V(G)$ may be partitioned into $b$ independent sets $O_1, \ldots, O_b$ and $a$ sets $D_1, D_2,\ldots, D_a$ whose induced graphs have maximum degree at most $d$. For convenience, we also call the graph to be $(d_1, d_2, \ldots, d_a, 0_1, \ldots, 0_b)$-colorable.   The maximum average degree, $mad(G)$, of  a graph $G$ is the maximum average degree over all subgraphs of $G$.  This parameter is used to measure how sparse a graph is.

Borodin and Kostochka (2011, \cite{BK11}) showed that $mad(G)\le \frac{12}{5}$, then $G$ is $(1,0)$-colorable, and the upper bound $\frac{12}{5}$ is sharp; and for $d\ge 2$, they (2014,~\cite{BK14}) showed that if $mad(G)\le 3-\frac{1}{d+1}$, then $G$ is $(d,0)$-colorable, and again the upper bound $3-\frac{1}{d+1}$ is sharp.   Borodin, Kostochka, and Yansey~(2013, \cite{BKY13}) also gave the sharp result that if $mad(G)\le \frac{14}{5}$, then $G$ is $(1,1)$-colorable.

Havet and Sereni (2006,~\cite{HS06}) proved that if $mad(G)<a+\frac{ad}{a+d}$, then $G$ is $(d_1, d_2, \ldots, d_a)$-colorable.    For nonnegative integers $a, b, d$, Dorbec-Kaiser-Montassier-Raspaud (2014~\cite{DKMR14}) proved that a graph $G$ is $(d_1, d_2, \ldots, d_a, 0_1, \ldots, 0_b)$-colorable if $mad(G)<a + b + \frac{da(a+1)}{(a+d+1)(a+1) + ab}.$    Montassier and  Ochem (2015,~\cite{MO15}) gave a good survey on the results of this kind.
%\item There exists non-$(d_1, d_2, \ldots, d_a, 0_1, \ldots, 0_b)$-colorable graph $G$ with $$mad(G)=2a + b -\frac{2}{(t+1)(b+1) - 1}+ \frac{2a+2}{(t+a)^{a+1}(b+1)^{a+1}-1}.$$

Clearly $(1_1, 1_2, \ldots, 1_a, 0_1, \ldots, 0_b)$-colorable graphs are also $(d_1, d_2, \ldots, d_a, 0_1, \ldots, 0_b)$-colorable for every $d\ge 1$.  When $d=1$, the above theorem shows that a graph $G$ is $(1_1, 1_2, \ldots, 1_a, 0_1, \ldots, 0_b)$-colorable if $mad(G)<a+b+\frac{a(a+1)}{(a+1)(a+2)+ab}<a+b+1$. In this note, we improve the upper bound by replacing the $1$ by $a/3$, which improves the previous result as long as $a>1$ or $b>0$. %another bound, which improves the Dorbec-Kaiser-Montassier-Raspaud bound and the bound by Havet and Sereni when $a\ge 3$.

\begin{thm}\label{main}
Let $a, b$ be nonnegative integers  with $a\ge 1$. Then
\begin{center}
if $mad(G)< \frac{4}{3}a + b$, then $G$ is $(1_1, 1_2, \ldots, 1_a, 0_1, \ldots, 0_b)$-colorable.
\end{center}
\end{thm}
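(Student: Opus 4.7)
The plan is to argue by contradiction. Let $G$ be a counterexample to Theorem~\ref{main} minimizing $|V(G)|+|E(G)|$. By edge-minimality we may assume $G$ is connected, and by vertex-minimality every proper subgraph of $G$ is $(1_1,\ldots,1_a,0_1,\ldots,0_b)$-colorable while $G$ itself is not. The proof then splits into a reducibility phase and a discharging phase.

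In the reducibility phase, the main tool is the extension argument: given a vertex $v$, by minimality one colors $G-v$ and asks whether the coloring extends to $v$. This immediately yields $\delta(G)\ge a+b$, since if $d(v)\le a+b-1$ then some class among the $a+b$ classes contains no neighbor of $v$ and $v$ may be placed there. For a vertex $v$ of degree exactly $a+b$ (call such a $v$ \emph{weak}), the same argument forces a rigid local structure on any coloring of $G-v$: each of the $b$ independent classes $O_i$ and each of the $a$ defective classes $D_j$ contains exactly one neighbor of $v$, the neighbor $u_j\in D_j$ is already paired in $D_j$ with some $w_j\ne v$, and $u_j$ cannot be recolored out of $D_j$ (else one could move $u_j$ and place $v$ into $D_j$). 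Chasing this non-recolorability forces $u_j$ to have a neighbor in every $O_i$ and to be stuck in every other $D_k$, giving lower bounds on $d(u_j)$ and, iteratively, on how densely weak vertices may be arranged near one another. I would formulate several concrete reducible configurations from this analysis: bounds on the number of weak neighbors a vertex of a given degree can have, and forbidden short configurations of weak vertices sharing common neighbors.

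In the discharging phase, assign each vertex the initial charge $\mu(v)=d(v)-(\tfrac{4}{3}a+b)$, so that $\sum_v \mu(v)=2|E(G)|-(\tfrac{4}{3}a+b)|V(G)|<0$ by the hypothesis on $mad(G)$. A weak vertex then has deficit exactly $-a/3$, while a vertex of degree $a+b+k$ has charge $k-a/3$, hence nonnegative once $k\ge\lceil a/3\rceil$. The rules should push surplus charge from higher-degree vertices to weak vertices so that every vertex ends with nonnegative final charge, contradicting the negative sum. The main obstacle, and the reason for the paper's ``global'' discharging, is that locally there may simply not be enough surplus: a single high-degree vertex can be adjacent to many weak vertices, and a naive neighbor-to-neighbor rule will over-draw its budget. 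The reducibility lemmas must therefore be sharp enough to guarantee that the deficit at a weak vertex can be routed to vertices at distance greater than one, and the rules must be designed so that the inflow to each weak vertex totals at least $a/3$ while no vertex sends out more than it carries. Calibrating these flows to exactly match the coefficient $\tfrac{4}{3}$ — in particular ruling out configurations where several weak vertices try to draw on the same limited source of surplus — is the delicate combinatorial core of the argument.
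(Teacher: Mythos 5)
Your setup (minimal counterexample, $\delta(G)\ge a+b$, initial charge $\mu(v)=d(v)-(\tfrac43 a+b)$, and the correct observation that a purely local neighbor-to-neighbor rule would over-draw the high-degree vertices) matches the paper, but the proposal stops exactly where the proof has to begin: you name the ``delicate combinatorial core'' without supplying it, and the route you sketch for it is not the one that works. Two ingredients are missing. First, the structural lemma is not only about weak vertices: for every $v$ with $h(v)=d(v)-(a+b)<a+b$, any coloring of $G-v$ gives $v$ at least $a+b-h(v)$ \emph{saturated} neighbors whose colors are unique in $N(v)$, at least $\max\{a-h(v),0\}$ of them $1$-saturated (a vertex being saturated if it lies in some $O_i$, or in some $D_j$ together with a $D_j$-colored neighbor); this follows from a counting argument on repeated colors in $N(v)$. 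Second --- and this is the heart of the proof --- the paper does not catalogue local reducible configurations and route charge a bounded distance; it builds a global filtration $F_0\subseteq F_1\subseteq\cdots$, where $F_0$ is the set of vertices of degree at least $2a+2b$ and $F_{k+1}=F_k\cup H_k$ with $H_k$ consisting of vertices having at least $\max\{a-h(v),0\}$ neighbors in $F_k$ and at least $a+b-h(v)$ neighbors in $G[H_k\cup F_k]$. That this filtration exhausts $V(G)$ is proved by a swapping argument applied to a coloring of $G-v$ chosen, over all $v\notin F_k$ and all colorings, to minimize the number of $1$-saturated vertices outside $F_k$: uncoloring a saturated neighbor $u\notin F_k$ of $v$ and giving $v$ its color yields a coloring of $G-u$ that is again minimal, which forces the relevant saturated neighbors into $H\cup F_k$. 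The discharging rule is then simply that each vertex of layer $k$ sends $\tfrac13$ to each neighbor outside $F_k$; such a vertex receives at least $\tfrac13\max\{a-h(v),0\}$ from earlier layers and sends to at most $d(v)-(a+b-h(v))=2h(v)$ neighbors, giving $\mu^*(v)\ge 0$ in every case. Your plan of routing the deficit ``to vertices at distance greater than one'' has no mechanism to bound that distance or to certify that the sources are not shared by too many sinks; the filtration, with charge cascading layer by layer through intermediate vertices that both receive and send, is precisely the device that replaces it, and without it (or an equivalent) the argument does not close.
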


Note that a complete graph with $2a+b+1$ vertices has maximum average degree $2a+b$ and is not $(1_1, 1_2, \ldots, 1_a, 0_1, \ldots, 0_b)$-colorable. So the best possible improvement over our result $\frac{4}{3}a+b$ would be $2a+b$. It is an interesting question to find optimal upper bound on maximum average degrees.  %On the other hand, When $a=0$, a complete graph with $b+1$ vertices is not $(0_1, \ldots, 0_b)$-colorable, so

Our proof uses a non-traditional discharging method. Instead of distributing charges among local neighborhoods, we define a flow of charges among subsets of vertices.  The global discharging method allows us to prove a stronger result without lengthy discussion.

\section{The proof}
Let $G$ be a counterexample with fewest vertices.  Then for each vertex $x$ in $G$, $G-x$ has a desired coloring.   In a coloring of $G-x$, a vertex is \textit{saturated} if it is colored $0_i$ or colored $1_j$ with $1_j$-colored neighbor, and it is called $0$-saturated and $1$-saturated, respectively.

\begin{lem}\label{lem:min}
$\delta(G) \geq a + b$.
\end{lem}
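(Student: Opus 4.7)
The plan is to argue by contradiction. Suppose there exists a vertex $x \in V(G)$ with $\deg(x) \leq a+b-1$. By the minimality of the counterexample, $G-x$ admits a valid $(1_1,\ldots,1_a,0_1,\ldots,0_b)$-coloring $\varphi$. I will show that $\varphi$ can always be extended to $x$, contradicting the assumption that $G$ has no such coloring.

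The heart of the argument is a counting step. For each $i \in \{1,\ldots,b\}$, the color $0_i$ is unavailable for $x$ only if some neighbor of $x$ is already colored $0_i$; so blocking all $b$ of the $0$-colors consumes at least $b$ distinct neighbors of $x$, one per color. For each $j \in \{1,\ldots,a\}$, the color $1_j$ is unavailable for $x$ only if either (a) $x$ has at least two $1_j$-colored neighbors, or (b) $x$ has exactly one $1_j$-colored neighbor and that neighbor is $1$-saturated. In either case, at least one neighbor of $x$ is colored $1_j$, so blocking all $a$ of the $1$-colors consumes at least $a$ further distinct neighbors (distinct from the $0$-colored ones and from each other, as the color classes are pairwise disjoint).

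Summing, we obtain $\deg(x) \geq a + b$, contradicting $\deg(x) \leq a+b-1$. Hence some color $c$ is available for $x$, and $\varphi$ extends to a valid coloring of $G$, giving the required contradiction.

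The only subtle point I foresee is checking that case (b) genuinely blocks $1_j$: if $x$'s unique $1_j$-colored neighbor $y$ is saturated, meaning $y$ already has a $1_j$-colored neighbor in $G-x$, then assigning $1_j$ to $x$ would give $y$ degree $2$ in the subgraph induced by $1_j$, violating the degree bound. This is precisely the notion of $1$-saturation defined just before the lemma, so no further verification is required. The remainder of the argument is a direct disjoint-neighbor count, and I expect no real obstacle.
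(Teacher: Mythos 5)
Your proof is correct and follows essentially the same route as the paper: the coloring of $G-x$ cannot extend to $x$, so each of the $a+b$ colors must appear on at least one neighbor of $x$, and since the color classes are disjoint this forces $\deg(x)\geq a+b$. Your case analysis for when a color $1_j$ is blocked is just a more detailed spelling-out of the paper's one-line observation that all $a+b$ colors must appear in $N(x)$.
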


\begin{proof}
By minimality of $G$, $G-v$ can be colored.  The coloring of $G-v$ cannot be extended to $v$.  Then all the $a+b$ colors must appear in $N(v)$. So $v$ has at least $a+b$ neighbors.
\end{proof}

For a vertex $v\in G$,  let $h(v)= d(v) - (a + b)$.

\begin{lem}\label{lem:satnbr}
Let $v\in V(G)$ be a vertex with $h(v)<a+b$. Then in a coloring of $G-v$,  $v$ has at least $a+b-h(v)$ saturated neighbors whose colors are unique in $N(v)$, and among them, at least $\max\{a - h(v),0\}$ are $1$-saturated.
\end{lem}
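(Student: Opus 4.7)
The plan is to work inside a fixed coloring of $G - v$ (which exists by minimality of $G$) and analyze, for each of the $a+b$ colors, the reason why it fails to extend to $v$. First I would catalogue the obstructions carefully: color $0_i$ fails iff $v$ has at least one $0_i$-colored neighbor, while color $1_j$ fails iff $v$ has either (i) at least two $1_j$-colored neighbors, or (ii) exactly one $1_j$-colored neighbor $u$ such that $u$ is already $1$-saturated (otherwise adding $v$ to $D_j$ keeps the maximum degree on $D_j$ at most $1$, giving a valid extension).

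Next I would sort the $a+b$ colors into \emph{tight} colors, for which exactly one neighbor of $v$ carries the color, and \emph{heavy} colors, for which at least two neighbors do. Let $b_1, b_2$ denote the respective counts among the $b$ zero-colors and $a_1, a_2$ the counts among the $a$ one-colors, so $b_1 + b_2 = b$ and $a_1 + a_2 = a$. By the obstruction analysis, each tight $0$-color supplies a $0$-saturated neighbor whose color is unique in $N(v)$, and each tight $1$-color supplies a $1$-saturated neighbor whose color is unique in $N(v)$. Hence $b_1 + a_1$ counts saturated neighbors of $v$ with unique colors in $N(v)$, of which $a_1$ are $1$-saturated.

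The final step is a single counting inequality. Summing color multiplicities in $N(v)$,
\[
d(v) \;\ge\; (b_1 + a_1) + 2(b_2 + a_2) \;=\; (a+b) + (a_2 + b_2),
\]
and since $d(v) = (a+b) + h(v)$, we get $a_2 + b_2 \le h(v)$. This yields $b_1 + a_1 \ge a + b - h(v)$ and $a_1 \ge a - h(v)$; since also $a_1 \ge 0$ trivially, the latter gives $a_1 \ge \max\{a - h(v), 0\}$, exactly as asserted.

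The only delicate step is getting the obstruction condition for $1$-colors precisely right, namely verifying that a lone non-saturated $1_j$-colored neighbor does not actually block the extension, so that case (ii) above truly forces the unique $1_j$-neighbor to be $1$-saturated. Once this is pinned down the rest is pure bookkeeping.
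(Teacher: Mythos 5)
Your proof is correct and follows essentially the same approach as the paper: neighbors carrying a color unique in $N(v)$ must be saturated (and $1$-saturated for $1$-colors), and the degree count $d(v)\ge$ (number of unique colors) $+\,2\cdot$(number of repeated colors) forces the stated bounds. The only cosmetic difference is that you obtain both conclusions from one direct inequality, whereas the paper runs two separate contradiction arguments.
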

%(do we need non-recolorable anywhere?)

\begin{proof}
Let $S$ be the set of neighbors of $v$ whose colors appear only once in $N(v)$.  Note that each vertex in $S$ must be saturated, for otherwise, we may color $v$ with the color of a non-saturated vertex in $S$.

Suppose that $s=|S|<a+b-h(v)$. Since $v$ cannot be colored, all $a+b$ colors should appear in $N(v)$. So $a+b-s$ colors appear at least twice in $N(v)$. Therefore, $d(v)\ge 2(a+b-s)+s=a+b+(a+b-s)>a+b+h(v)=d(v)$, a contradiction.  This shows the first part of the lemma.

Now let $v$ such that $h(v)<a$, and suppose that $v$ has $p<a-h(v)$ $1$-saturated neighbors whose colors appear once in $N(v)$. Note that all $a+b$ colors must appear on $N(v)$, and each of the non-saturated neighbors must share the same color with another neighbor. Therefore, $d(v)\ge 2(a-p)+b+p=a+b+(a-p)>a+b+h(v)=d(v)$, a contradiction.
\end{proof}

\begin{definition}
Let $F_0$ be the set of vertices with degrees at least $2a+2b$.  For $k\ge 0$,  we define $H_k\subseteq V(G)-F_k$ to be a set of vertices with the following properties:

\begin{enumerate}
\item $v\in H_k$ has at least $\max\{a-h(v),0\}$ neighbors in $F_k$, and
\item $v\in H_k$ has at least $a + b - h(v)$ neighbors in the induced subgraph $G[H_k\cup F_k]$.
\end{enumerate}
Let $F_{k+1}=F_k\cup H_k$.
\end{definition}

We should note that $G$ may not have vertices of degree at least $2a+2b$, thus $F_0$ could be an empty set.  However, we shall show in the lemma below that $H_0$ can be chosen to be non-empty (that is, there exists vertices with $h(v)\le a$ and satisfying the second condition).

\begin{lem}\label{lem:flagged}
For some $k\ge 0$, $F_k=V(G)$.
\end{lem}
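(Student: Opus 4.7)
The plan is to argue by contradiction: suppose $F_\infty := \bigcup_{k \ge 0} F_k \subsetneq V(G)$, set $R := V(G) \setminus F_\infty$, and aim to produce a subgraph of $G$ of average degree at least $\tfrac{4}{3}a+b$, contradicting $mad(G) < \tfrac{4}{3}a+b$.

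Maximality of $F_\infty$ says that no nonempty $H \subseteq R$ can satisfy both properties (1) and (2) with respect to $F_\infty$. I would certify this with a peeling argument: start with $H := R$ and iteratively remove any $v \in H$ violating (1) (with respect to $F_\infty$) or (2) (with respect to the current $H$ and $F_\infty$); the process must terminate at $H = \emptyset$. Peel the vertices violating (1) first --- call this set $A$ --- since (1) depends only on $F_\infty$; let $B := R \setminus A$ be the vertices peeled afterwards because of (2)-failures. For $v \in A$ one gets $h(v) < a$ and $d_{F_\infty}(v) \le a - h(v) - 1$, hence $d_R(v) \ge b + 2h(v) + 1$. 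For $v \in B$ peeled at time $t(v)$, let $d^-(v)$ and $d^+(v)$ count the $R$-neighbors of $v$ peeled strictly before, resp.\ after, $v$. The (2)-failure reads $d_{F_\infty}(v) + d^+(v) \le a+b-h(v)-1$, and combining with $d_{F_\infty}(v) \ge \max\{a - h(v), 0\}$ from (1) yields the sharp bounds $d^-(v) \ge 2h(v) + 1$ and $d^+(v) \le b - 1$. Two immediate consequences: the very first $B$-vertex peeled needs an earlier-peeled $R$-neighbor, so $A \ne \emptyset$ whenever $R \ne \emptyset$; and $B = \emptyset$ when $b = 0$.

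Aggregating, $2|E(G[R])| = \sum_{v \in R} d_R(v) \ge (b+1)|A| + |B| + 2 \sum_{v \in R} h(v)$, and the bound $d^+(v) \le b-1$ further caps the intra-$B$ edge count by $\gamma \le (b-1)|B|$. The hard step is to convert this into a strict density violation on some subgraph of $G$. Here I would invoke the global discharging announced in the introduction: place initial charge $d(v) - (\tfrac{4}{3}a + b)$ at each vertex (strictly negative in total by the mad hypothesis), route the surplus of at least $\tfrac{2}{3}a + b$ from each $v \in F_0$ outward through the nested layers $F_0 \subseteq F_1 \subseteq \cdots$ along the condition-(1) neighbors guaranteed at each stage, and finally cover the residual deficit on $R$ using the structural estimates above on $A$ and $B$. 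The main obstacle is calibrating these charge transfers so that every vertex ends with nonnegative charge, which would contradict the negative initial total; the sharp $d^+(v) \le b - 1$ on $B$ and the forced nonemptiness of $A$ are the essential levers that ought to make the accounting balance.
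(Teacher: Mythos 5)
Your strategy has a genuine and, I believe, unfixable gap: you are trying to deduce the lemma from the density hypothesis $mad(G)<\frac{4}{3}a+b$ together with the degree bounds alone, but the lemma is not a density statement. Take $a=b=1$ and $G=C_5$: then $mad(G)=2<\frac{7}{3}$, $\delta(G)=2=a+b$, and $h(v)=0$ for every vertex, so $F_0=\{v:d(v)\ge 4\}=\emptyset$, and condition (1) then demands $\max\{a-h(v),0\}=1$ neighbor inside the empty set, which no vertex has; hence $F_k=\emptyset\ne V(G)$ for all $k$. (The same happens for $K_2$ with $a=1$, $b=0$.) These graphs satisfy every hypothesis your argument is permitted to use, so no amount of peeling and discharging on $R$ can reach a contradiction. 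The lemma survives only because $G$ is a vertex-minimal non-colorable graph, and the paper's proof uses exactly that: it considers, over all $v\notin F_k$, the colorings of $G-v$ that minimize the number of $1$-saturated vertices outside $F_k$, lets $H$ be the set of $v$ realizing this minimum, and verifies (1) and (2) for $H$ via Lemma~\ref{lem:satnbr} together with an uncolor-$u$/color-$v$ swap that would otherwise yield a coloring with fewer $1$-saturated vertices, or move the uncolored vertex to a vertex that must then lie in $H$. Your proposal never invokes Lemma~\ref{lem:satnbr} or any coloring of $G-v$, and that is the essential ingredient.

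Your own accounting already signals the problem. The peeling computations you do carry out are correct: the maximal admissible $H\subseteq R$ is indeed obtained by iterated deletion, $d_R(v)\ge b+2h(v)+1$ on $A$, and $d^-(v)\ge 2h(v)+1$, $d^+(v)\le b-1$ on $B$. But aggregating them gives $2|E(G[R])|\ge (b+1)|A|+|B|+2\sum_{v\in R}h(v)$, i.e., an average degree on $R$ of order $b+1$ when $h\equiv 0$, which is far below the target $\frac{4}{3}a+b$ for $a\ge 1$; and routing surplus charge outward from $F_0$ cannot help when $F_0=\emptyset$, as in the examples above. The ``hard step'' you defer is therefore not a calibration issue but an impossibility; the argument must be restarted from the colorings of $G-v$.
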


\begin{proof}
Suppose that $F_k\not=V(G)$ for every $k\ge 0$.   Consider the largest subset $F_k$ in $G$.

In all colorings of $G-v$ with $v\not\in F_k$,  the ones with minimized number of $1$-saturated vertices in $V(G)-F_k$ are called {\em minimum partial coloring} of $G$.  Not every vertex $v$ in $V(G)-F_k$ can make $G-v$ to have a minimum partial coloring of $G$, but since $G$ is finite, some vertices do.   Let $H$ be the set of vertices in $V(G)-F_k$ such that $G-v$ for $v\in H$ has a minimum partial coloring. Then for each $v\in H$,

\begin{enumerate}
\item $v$ has at least $\max\{a-h(v),0\}$ neighbors in $F_k$.

We assume that $a-h(v)>0$, and suppose that $v$ has fewer than $a-h(v)$ neighbors in $F_k$, which includes the case that $F_0=\emptyset$.  Consider a minimum partial coloring $c(G-v)$.  By Lemma~\ref{lem:satnbr}, $v$ has at least $a-h(v)$ $1$-saturated neighbors whose colors appear once in $N(v)$, and so one of them, say $u$, must be in $V(G)-F_k$.  Uncolor $u$ and color $v$ with the color of $u$, we obtain a coloring of $G-u$ with $u\in V(G)-F_k$.  In this coloring, $v$ is not saturated and no other vertices become saturated; so this coloring has fewer $1$-saturated  vertices than the one of $G-v$, a contradiction to the minimality of the coloring. Therefore, $v$ has at least $\max\{a-h(v), 0\}$ neighbors in $F_k$.

\item $v$ has at least $a+b-h(v)$ neighbors in the induced graph $G[H\cup F_k]$.

Since $v\in H$, $v\not\in F_0$, so $d(v)<2a+2b$, thus $a+b-h(v)>0$.    Consider a minimum coloring of $G-v$.  By Lemma~\ref{lem:satnbr}, $v$ has at least $a+b-h(v)$ saturated neighbors whose colors appear once in $N(v)$. We claim that all such neighbors are in $H\cup F_k$. For otherwise, let $w\not\in H\cup F_k$ be such a neighbor.  Uncolor $w$ and color $v$ with the color of $w$, we obtain a coloring of $G-w$ with $w\in V(G)-F_k$. Now, $v$ is not $1$-saturated and no other vertices become $1$-saturated.  So this coloring is also a minimum partial coloring with $w\in V(G)-F_k$.  By the definition of $H$, $w\in H$, a contradiction.
\end{enumerate}

Hence, by definition, $H\subseteq H_k$ (true even if $k=0$ and $F_0$ is empty), a contradiction.
\end{proof}

Now, we are ready to prove Theorem~\ref{main}.

For each vertex $v\in G$, let $\mu(v)=d(v)-(\frac{4}{3}a+b)=h(v)-\frac{1}{3}a$.  Since $mad(G)<\frac{4}{3}a+b$, $$\sum_{v\in V(G)} \mu(v)=\sum_{v\in V(G)} (d(v)-(\frac{4}{3}a+b))=2|E(G)|-(\frac{4}{3}a+b)|V(G)|<0.$$

We distribute the charges among vertices by the following rule:
\begin{enumerate}
\item[(R)] if $v\in F_k$ with $k=0$ or $v\in F_{k} - F_{k-1}$ for some $k > 0$, then $v$ gives $\frac{1}{3}$ to each $u \in N(v) - F_k$.
\end{enumerate}

\medskip

For vertex $v\in F_0$, $v$ gives at most $\frac{1}{3}$ to each of its neighbors. Note that $d(v)\ge 2a+2b$,  so the final charge of $v$ is $$\mu^*(v)\ge d(v)-(\frac{4}{3}a+b)-\frac{1}{3} d(v)\ge \frac{1}{3}(2(2a+2b)-4a-3b)>0.$$

Let $v\in F_k-F_{k-1}$ with $k\ge 1$. Then $v\in H_k$.  By the rule and definition of $H_k$, $v$ receives at least $\frac{1}{3}\max\{a-h(v),0\}$ from the neighbors in $F_{k-1}$ and gives out $\frac{1}{3}$ to each of its neighbors not in $F_k$.  By definition, $v$ has at most $d(v)-(a+b-h(v))=2h(v)$ neighbors not in $F_k$.    So the final charge of $v$ is
\begin{align*}
\mu^*(v)&\ge (h(v)-\frac{1}{3}a)+\frac{1}{3}\max\{a-h(v),0\}-\frac{1}{3}\cdot 2h(v)\\
&\ge \begin{cases}
	(h(v)-\frac{1}{3}a)+\frac{1}{3}(a-h(v))-\frac{2}{3}h(v)=0, \text{ if $a\ge h(v)$,}\\
	(h(v)-\frac{1}{3}a)+0-\frac{2}{3}h(v)=\frac{1}{3}(h(v)-a)>0, \text{ if $a<h(v)$.}
	\end{cases}			
\end{align*}

Therefore, every vertex in $G$ has a nonnegative final charge.  But $$\sum_{v\in V(G)}\mu^*(v)=\sum_{v
\in V(G)} \mu(v)<0,$$ we reach a contradiction.  This contradiction shows the truth of Theorem~\ref{main}.

\bigskip

{\bf Acknowledgement:} 
 We thank the referees for their helpful comments.

\end{document}